    \theoremstyle{plain}
    \newtheorem{theorem}{Theorem}
    \newtheorem{proposition}[theorem]{Proposition}
    \newtheorem{corollary}[theorem]{Corollary}
    \newtheorem{lemma}[theorem]{Lemma}
    \newtheorem{conjecture}{Conjecture}
    \newtheorem*{theorem*}{Theorem}
    \newtheorem*{proposition*}{Proposition}
    \newtheorem*{corollary*}{Corollary}
    \newtheorem*{lemma*}{Lemma}
    \newtheorem*{conjecture*}{Conjecture}
    \theoremstyle{definition}
    \newtheorem{definition}[theorem]{Definition}
    \newtheorem{example}[theorem]{Example}
    \newtheorem*{definition*}{Definition}
    \newtheorem*{example*}{Example}
    \theoremstyle{remark}
    \newtheorem{remark}[theorem]{Remark}
    \newtheorem*{remark*}{Remark}
    \newtheorem*{question*}{Question}
    \newcommand{\cF}{\mathcal{F}}
    \newcommand{\cG}{\mathcal{G}}
    \newcommand{\cK}{\mathcal{K}}
    \newcommand{\cH}{\mathcal{H}}
    \newcommand{\cP}{\mathcal{P}}
    \newcommand{\im}{\operatorname{im }}
    \renewcommand{\ker}{\operatorname{ker }}
    \newcommand{\Hom}{\operatorname{Hom}}
    \newcommand{\Mor}{\operatorname{Mor}}
    \newcommand{\Obj}{\operatorname{Obj}}
    \newcommand{\op}[1]{{#1}^{\textup{op}}}
    \newcommand{\id}{\operatorname{id}}
     \newcommand{\defeq}{\stackrel{\scriptscriptstyle{\mathrm{def}}}{=}}
    \renewcommand{\iff}{\Leftrightarrow}
    \newcommand{\git}{\mathbin{
  \mathchoice{/\mkern-6mu/}
    {/\mkern-6mu/}
    {/\mkern-5mu/}
    {/\mkern-5mu/}}}
    \renewcommand\bar\overline
    \renewcommand\tilde\widetilde
    \newcommand{\smor}[1]{\stackrel{#1}{\nabla}}
\title{Slices of groupoids are group-like}
\author{Nicholas Cooney
\and  Jan E. Grabowski\footnotemark[3] 
}
\date{\today}
\begin{document}
\maketitle

\renewcommand{\thefootnote}{\fnsymbol{footnote}}
\footnotetext[3]{Email: \url{j.grabowski@lancaster.ac.uk}.  Website: \url{http://www.maths.lancs.ac.uk/~grabowsj/}}
\renewcommand{\thefootnote}{\arabic{footnote}}
\setcounter{footnote}{0}

\begin{abstract}
  Given a category, one may construct slices of it.  That is, one builds a new category whose objects are the morphisms from the category with a fixed codomain and morphisms certain commutative triangles.  If the category is a groupoid, so that every morphism is invertible, then its slices are (connected) groupoids.
  
  We give a number of constructions that show how slices of groupoids have properties even closer to those of groups than the groupoids they come from.  These include natural notions of kernels and coset spaces.

  \vspace{1em}
\noindent MSC (2020): 18B40 (Primary), 20N02 (Secondary)
\end{abstract}


\section{Introduction}

  A groupoid is a category in which every morphism is invertible.  Since a one-object groupoid is naturally identified with a group, groupoids are regarded as a many-object generalisation of a group.  However groupoids do not have a completely parallel theory to that of groups.  For example, a groupoid does not have an identity element.

  In this short note, we observe that by passing from a groupoid to one of its slices, one finds a situation closer to that of groups.  A slice of a category $\mathcal{C}$ at an object $X$ is a category whose objects are morphisms of $\mathcal{C}$ with codomain $X$ and morphisms certain commutative triangles.

  In particular, since any object has an identity map $\id_{X}\colon X\to X$, the slice of a groupoid has a distinguished object $\id_{X}$ which, of course, has the properties of an identity.

  Given a functor $\cF$ between groupoids $\cG$ and $\cH$, we have an induced functor between $\cG/X$ and $\cH/\cF X$.  With some relatively mild assumptions, this induced functor behaves like a group homomorphism: we may define its kernel, prove an analogue of the correspondence between cosets of a kernel and elements of the image and show that there is a natural bijection between pre-images.

  In a slightly different direction, we give a ``coset space'' construction associated to the action of a subgroupoid on either the whole groupoid or a slice of it.

  These constructions were found in the course of work on the paper \cite{QuantumAutomorphisms}. However they are much more general than the setting of that work and we hope they may be of independent interest.

\section{Identities and kernels}

Let $\cG$ be a groupoid.  We first recall the definition of the slice groupoid, $\cG \slash X$, for $X \in \cG$.  The objects of $\cG \slash X$ are all morphisms in $\cG$ with codomain $X$.  A morphism from $f\colon Y\to X$ to $f'\colon Z \to X$ is a commutative triangle
\begin{center}
\begin{tikzpicture}[node distance=1cm,on grid]
\node (B) at (0,0) {$Y$};
\node (C) [right=of B] {$Z$};
\node (A) [below=of B,xshift=0.5cm,yshift=0cm] {$X$};
\draw[->] (B) -- node [above] {$g$} (C);
\draw[->] (B) -- node [left] {$f$} (A);
\draw[->] (C) -- node [right] {$f'$} (A);
\end{tikzpicture}
\end{center}
when a suitable $g\in \Mor(\cG)$ exists.  Let us write $\smor{g}\colon f \to f'$ for such a triangle.

Composition of morphisms is given by horizontal concatenation of triangles:
\begin{center}
  \begin{tikzpicture}[node distance=1cm,on grid]
    \begin{scope}[xshift=-4cm]
      \node (B) at (0,0) {$Y$};
\node (C) [right=of B] {$Y'$};
\node (A) [below=of B,xshift=0.5cm,yshift=0cm] {$X$};
\draw[->] (B) -- node [above] {$g$} (C);
\draw[->] (B) -- node [left] {$f$} (A);
\draw[->] (C) -- node [right] {$f'$} (A);
    \end{scope}
    \node (circ) at (-2.5,-0.5) {$\circ$};
    \begin{scope}[xshift=-2cm]
      \node (B) at (0,0) {$Y'$};
\node (C) [right=of B] {$Y''$};
\node (A) [below=of B,xshift=0.5cm,yshift=0cm] {$X$};
\draw[->] (B) -- node [above] {$g'$} (C);
\draw[->] (B) -- node [left] {$f'$} (A);
\draw[->] (C) -- node [right] {$f''$} (A);
    \end{scope}
    \node (eq) at (0,-0.5) {$=$};
    \begin{scope}[xshift=1cm]
      \node (B) at (0,0) {$Y$};
\node (C) [right=of B] {$Y'$};
\node (D) [right=of C] {$Y''$};
\node (A) [below=of C] {$X$};
\draw[->] (B) -- node [left,xshift=-0.25em] {$f$} (A);
\draw[->] (C) -- node [right,xshift=-0.1em] {$f'$} (A);
\draw[->] (D) -- node [right] {$f''$} (A);
\draw[->] (B) -- node [above] {$g$} (C);
\draw[->] (C) -- node [above] {$g'$} (D);
    \end{scope}
        \node (eq2) at (4,-0.5) {$=$};
        \begin{scope}[xshift=5cm]
                \node (B) at (0,0) {$Y$};
\node (C) [right=of B] {$Y''$};
\node (A) [below=of B,xshift=0.5cm,yshift=0cm] {$X$};
\draw[->] (B) -- node [above,yshift=0.1em] {$g'\circ g$} (C);
\draw[->] (B) -- node [left] {$f$} (A);
\draw[->] (C) -- node [right] {$f''$} (A);
    \end{scope}
    
\end{tikzpicture}
\end{center}
which we will write in our more compact notation as $\smor{g} \circ \smor{g'}=\smor{g' \circ g}$.  This is the natural order to write composition in the slice category, although it conflicts with the ``right to left'' convention for composing morphisms.

Note that since $\cG$ is a groupoid, for any $f,f'$, there exists a morphism $f\to f'$ in $\cG \slash X$ induced by $g=(f')^{-1}\circ f$.  Moreover, since every morphism in $\cG$ is invertible, the every morphism in $\cG \slash X$ is too, so that $\cG \slash X$ is a connected groupoid.  In fact, more is true.

\begin{lemma} In the slice groupoid $\cG/X$, we have $|\Hom_{\cG/X}(f,f')|=1$ for all objects $f,f'$ of $\cG/X$.
\end{lemma}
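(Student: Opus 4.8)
The plan is to show that $\Hom_{\cG/X}(f,f')$ is simultaneously nonempty and has at most one element, using only the single structural fact available — that every morphism of $\cG$, and in particular the target map $f'$, is invertible. Throughout I write $f\colon Y\to X$ and $f'\colon Z\to X$ for the two objects.

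First I would unwind the definition to reduce the statement to a counting problem in $\cG$ itself. By definition a morphism $\smor{g}\colon f\to f'$ is a commutative triangle, i.e.\ a morphism $g\colon Y\to Z$ in $\cG$ satisfying $f'\circ g=f$, and such a triangle is completely determined by its underlying arrow $g$. Hence $|\Hom_{\cG/X}(f,f')|$ equals the number of $g\in\Mor(\cG)$ with $f'\circ g=f$, and the claim becomes: there is exactly one such $g$.

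For existence I would simply exhibit the candidate $g=(f')^{-1}\circ f$, which is well defined because $f'$ is invertible in the groupoid $\cG$, and verify $f'\circ\paren{(f')^{-1}\circ f}=f$; this is precisely the morphism already noted in the discussion preceding the statement, so $|\Hom_{\cG/X}(f,f')|\geq 1$. For uniqueness I would take any two arrows $g,h\colon Y\to Z$ with $f'\circ g=f=f'\circ h$ and left-compose with $(f')^{-1}$ to conclude $g=h$, so that $\smor{g}=\smor{h}$. In other words, invertibility of $f'$ makes it left-cancellable, forcing the two triangles to coincide.

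There is no substantive obstacle: the content is essentially the remark that an isomorphism is both split epic (giving existence) and monic (giving uniqueness). The only point deserving a moment's care is the bookkeeping observation in the second step — that a slice morphism is faithfully encoded by its underlying arrow $g$ — so that counting commutative triangles genuinely coincides with counting solutions of $f'\circ g=f$ in $\cG$.
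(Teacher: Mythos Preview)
Your argument is correct and essentially identical to the paper's: uniqueness follows from left-cancellability of the invertible $f'$, and existence from the explicit witness $g=(f')^{-1}\circ f$. The only difference is presentational---the paper handles existence in the sentence preceding the lemma and proves only uniqueness in the proof block, whereas you fold both halves into the proof itself.
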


\begin{proof} A morphism from $f$ to $f'$ in the slice category is a triangle $\smor{g}$ with $g$ such that $f=f'\circ g$.  If $\smor{g'}$ is another morphism, so $f=f'\circ g'$, then since we are in a groupoid, $f'\circ g=f=f'\circ g'$ implies $g=g'$.
  \end{proof}

We also see that $\id_{X}\colon X\to X$ is a zero object in $\cG \slash X$, that is, it is both initial and terminal.  For we have the morphisms $\smor{g^{-1}}\colon \id_{X} \to g$ and $\smor{g}\colon g\to \id_{X}$, which are unique by uniqueness of inverses.  We will shortly show that $\id_{X}$ is the natural analogue of the identity element of a group.  Indeed, if $\cG$ is a one-object groupoid with object $\ast$, then $\Mor_{\cG}(\ast,\ast)$ is a group with identity element $\id_{\ast}\colon \ast \to \ast$.  Considered in the slice $\cG/\ast$, this element is precisely the zero object just discussed.

Now let $\cH$ be another groupoid and consider $\cF\colon \cG \to \cH$ a functor.  For each $X$, there is a induced functor $\cF_{X}\colon \cG/X \to \cH/\cF X$ given on objects of $\cG/X$ by $\cF_{X}f \defeq\cF f$ and on morphisms by $\cF_{X}\smor{g}\ \defeq\ \smor{\cF g}$.  Note that the codomain of $\cF_{X}f=\cF f$ is indeed $\cF X$ and also that $\smor{\cF g}$ is precisely the commutative triangle in $\cH$ given by taking the image of $\smor{g}$ under $\cF$.

Note that $\cF_{X}$ is necessarily essentially surjective, since $\cH/\cF X$ is a connected groupoid; in the groupoid setting, ``up to isomorphism'' statements are extremely weak.

Let us denote by $\cG_{X}$ the connected component of $\cG$ containing $X$. The connected component $\cH_{\cF X}$ consists of all objects $Y$ of $\cH$ such that there exists a morphism $h\colon Y \to \cF X$.  (Since $\cH$ is a groupoid, it is not necessary to also consider the objects $Z$ for which there is a morphism $h'\colon \cF X \to Z$, as such a morphism exists if and only if $(h')^{-1}\colon Z \to \cF X$ exists.)

Note that connected components are, by definition, taken to be full subcategories and are therefore also groupoids.  Note too that the slices $\cG/X$ and $\cG_{X}/X$ are naturally identified: taking the slice at $X$ disregards any morphism whose domain is not in the connected component containing $X$.  This shows that if we are interested only in slices of groupoids, we may assume our groupoids are connected without loss of generality.

If $X'$ is an object in $\cG_{X}$, so that there exists a morphism $g\colon X' \to X$, then $\cF X'$ is an object of $\cH_{\cF X}$, as witnessed by $\cF g$.  Then if $g\colon X' \to X''$ is a morphism in $\cG_{X}$, $\cF g\colon \cF X' \to \cF X''$ is a morphism in $\cH_{\cF X}$, since it is a morphism between objects of $\cH_{\cF X}$ and the latter is a full subcategory of $\cH$.

  We may therefore consider the restriction of $\cF$ to $\cG_{X}$ as a functor $\cF|_{\cG_{X}}\colon \cG_{X} \to \cH_{\cF X}$. Since $\cH_{\cF X}$ is a connected groupoid, $\cF|_{\cG_{X}}$ is also essentially surjective.


The slice $\cH/\cF X$ has its own identity element, the zero object of $\cH/\cF X$, $\id_{\cF X}$.  It is then a natural question whether or not we have $\cF \id_{X}=\id_{\cF X}$.  From the lemma below, we see that this holds provided the functor $\cF|_{\cG_{X}}$ is ``full at $X$'' (it need not be full, although this is certainly sufficient).

\begin{lemma}\label{l:img-of-id-is-id} Assume that the restriction $\cF|_{\cG_{X}}$ of $\cF$ to the connected component $\cG_{X}$ has the property that the induced function $\Mor_{\cG_{X}}(X,X)\to \Mor_{\cH_{\cF X}}(\cF X,\cF X)$ is surjective. Then $\cF \id_{X}=\id_{\cF X}$.
\end{lemma}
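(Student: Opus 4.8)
The plan is to reduce the statement to the single morphism equality $\cF\,\id_X=\id_{\cF X}$ in $\Mor_{\cH}(\cF X,\cF X)$, and then to extract that equality from the slice structure together with the surjectivity hypothesis. First I would record the two facts that make the reduction legitimate: by the definition of the induced functor, $\cF_X$ acts on underlying morphisms as $\cF$, so the object $\cF_X\,\id_X$ of $\cH/\cF X$ is literally $\cF\,\id_X$; and, by exactly the argument already given for $\cG/X$, the object $\id_{\cF X}$ is the zero object of $\cH/\cF X$. Thus the claim is precisely that $\cF_X$ carries the zero object of $\cG/X$ to the zero object of $\cH/\cF X$.

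For the main step I would use the hypothesis to produce a witness. Since $\id_{\cF X}$ is an endomorphism of $\cF X$ it lies in $\Mor_{\cH_{\cF X}}(\cF X,\cF X)$, so by surjectivity there is some $a\in\Mor_{\cG_X}(X,X)$ with $\cF a=\id_{\cF X}$. Both $a$ and $\id_X$ are objects of $\cG/X$, being endomorphisms of $X$, and since $a=\id_X\circ a$ there is a slice morphism $\smor{a}\colon a\to\id_X$. Applying $\cF_X$ produces the slice morphism $\smor{\cF a}\colon \cF a\to\cF\,\id_X$ in $\cH/\cF X$; because this is a genuine morphism of the slice, its defining triangle commutes, which reads $\cF a=\cF\,\id_X\circ\cF a$. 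Substituting $\cF a=\id_{\cF X}$ then gives $\id_{\cF X}=\cF\,\id_X\circ\id_{\cF X}=\cF\,\id_X$, as required.

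The step I expect to be the crux is the first one, namely guaranteeing that the target zero object $\id_{\cF X}$ is actually realised as $\cF a$ for some endomorphism $a$ of $X$. This is exactly the content of the ``full at $X$'' condition: without it one cannot place $\id_{\cF X}$ in the image of $\cF_X$ and run the argument. Everything afterwards is formal, relying only on the fact that $\cF_X$ sends morphisms to morphisms (so that image triangles commute) and on the uniqueness of slice morphisms from the first lemma, which removes any ambiguity in the triangles involved. I would finally note that the hypothesis is used only to secure the existence of a single such preimage $a$, which is why it is genuinely mild.
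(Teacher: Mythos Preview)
Your argument is correct but takes a slightly different route from the paper's. The paper works group-theoretically: for an arbitrary $g\in\Mor_{\cH}(\cF X,\cF X)$ it uses surjectivity to write $g=\cF f$, then functoriality gives $\cF\id_X\circ g=\cF(\id_X\circ f)=\cF f=g$ and similarly on the other side, so $\cF\id_X$ is the identity of the group $\Mor_{\cH}(\cF X,\cF X)$ and hence equals $\id_{\cF X}$. You instead invoke the hypothesis only once, to produce a single preimage $a$ of $\id_{\cF X}$, and then apply the same functoriality step to the single equation $a=\id_X\circ a$ (packaged as a slice triangle), obtaining $\id_{\cF X}=\cF\id_X\circ\id_{\cF X}=\cF\id_X$ directly. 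The slice language is inessential here---the key equality is just $\cF(\id_X\circ a)=\cF\id_X\circ\cF a$---but your version has the merit of showing that the full surjectivity is not needed: it suffices that $\id_{\cF X}$ lie in the image of $\Mor_{\cG_X}(X,X)\to\Mor_{\cH_{\cF X}}(\cF X,\cF X)$.
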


\begin{proof} Let $g\in \Mor_{\cH}(\cF X,\cF X)$.  By the assumption, there exists $f\in \Mor_{\cG}(X,X)$ such that $g=\cF f$.  Since for all $f\in \Mor_{\cG}(X,X)$, $\id_{X}$ satisfies $\id_{X}\circ f=f=f\circ \id_{X}$, we have that $\cF \id_{X} \circ\, g=g=g\circ \cF \id_{X}$ for all $g\in \Mor_{\cH}(\cF X,\cF X)$.  Hence $\cF \id_{X}$ is an identity element in the group $\Mor_{\cH}(\cF X,\cF X)$ and so is equal to $\id_{\cF X}$.
\end{proof}

Now we may define a notion of kernel for functors between slices of groupoids.

\begin{definition} Let $\cG$ and $\cH$ be groupoids, $\cF\colon \cG \to \cH$ a functor between them and $X$ an object of $\cG$. Let $\cF_{X}\colon \cG/X \to \cH/\cF X$ be the induced functor.

  Define
  \[ \ker \cF_{X} = \{ f\colon Y\to X \mid \cF_{X}f=\id_{\cF X} \} \]
  to be the collection of objects of $\cG/X$ whose image under $\cF_{X}$ is the object $\id_{\cF X}$ of $\cH/\cF X$.
\end{definition}

The kernel of $\cF_{X}$ may in principle be an empty collection, but provided $\cF$ is full at $X$ (i.e. satisfies the assumption in Lemma~\ref{l:img-of-id-is-id}), we have $\id_{X}\in \ker \cF_{X}$ and a non-empty kernel.

One may easily verify the following properties of objects of the the kernel.
\begin{itemize}
\item For any $f\colon Y\to X$ belonging to $\ker \cF_{X}$, we have $\cF Y=\cF X$.
\item Given $f\in \ker \cF_{X}$, the morphism $\smor{f}\in \Mor_{\cG/X}(f,\id_{X})$ satisfies $\cF_{X} \smor{f}=\smor{\id_{\cF X}}$ (noting that $\smor{\id_{\cF X}}\colon \id_{\cF X}\to \id_{\cF X}$ is the identity morphism for the object $\id_{\cF X}$ in $\cH/\cF X$).
\item More generally, given $f,g\in \ker \cF_{X}$, we have a (unique) morphism in $\cG/X$ given by $\smor{g^{-1}\circ f}\colon f \to g$ and this morphism satisfies $\cF_{X} \smor{g^{-1}\circ f}=\smor{\id_{\cF X}}$.
\end{itemize}
Note that in the last of these properties, $g^{-1}\circ f$ is not necessarily itself in $\ker \cF_{X}$, since the codomain of $g^{-1}\circ f$ is not necessarily $X$.  

Let us denote by $\im \cF_{X}$ the image of $\cF_{X}$, as a collection of objects of $\cH/\cF X$.  (Note that at present, both $\ker \cF_{X}$ and $\im \cF_{X}$ are not being considered as categories, only as subcollections of the objects of the respective groupoids.)  The following is immediate.

\begin{lemma} Let $\cG$ and $\cH$ be groupoids, $\cF\colon \cG \to \cH$ a functor between them and $X$ an object of $\cG$. Let $\cF_{X}\colon \cG/X \to \cH/\cF X$ be the induced functor.

  For any $g\in \im \cF_{X}$, define
  \[ \cF_{X}^{-1}(g) = \{ f\colon Y\to X \mid \cF f=g \}. \]
  
  Then
  \begin{enumerate}[label=\textup{(\roman*)}]
  \item the collection $\cP(\cF_{X})\defeq \{ \cF_{X}^{-1}(g) \mid g\in \im \cF_{X} \}$ is a partition of the collection of objects of $\cG/X$; and
    \item there is a bijection between $\cP(\cF_{X})$ and $\im \cF_{X}$.
  \end{enumerate} \qed
\end{lemma}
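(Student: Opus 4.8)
The plan is to recognise that both parts are purely set-theoretic once the correct function is isolated. Let $\Phi$ denote the object-assignment of $\cF_{X}$, i.e. the function sending each object $f\colon Y\to X$ of $\cG/X$ to $\cF_{X}f=\cF f$, an object of $\cH/\cF X$. Then $\im \cF_{X}$ is by definition the image of $\Phi$, and for $g\in \im \cF_{X}$ the collection $\cF_{X}^{-1}(g)=\{ f\colon Y\to X \mid \cF f=g \}$ is exactly the fibre $\Phi^{-1}(g)$. Thus (i) and (ii) reduce to the standard facts that the non-empty fibres of a function partition its domain and are in bijection with its image.

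First, for (i), I would check the three defining properties of a partition directly. Each member $\cF_{X}^{-1}(g)$ is non-empty because $g\in \im \cF_{X}$ guarantees some object $f$ of $\cG/X$ with $\Phi(f)=g$, so $f\in \cF_{X}^{-1}(g)$. The members cover all objects of $\cG/X$: an arbitrary object $f\colon Y\to X$ has $\Phi(f)=\cF f\in \im \cF_{X}$, so setting $g\defeq\cF f$ gives $f\in \cF_{X}^{-1}(g)$. Finally, the members are pairwise disjoint: if some $f$ lay in both $\cF_{X}^{-1}(g)$ and $\cF_{X}^{-1}(g')$, then $g=\Phi(f)=g'$, so fibres carrying distinct labels cannot meet.

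Second, for (ii), I would take the assignment $g\mapsto \cF_{X}^{-1}(g)$ as the candidate map $\im \cF_{X}\to \cP(\cF_{X})$. Surjectivity is immediate from the definition of $\cP(\cF_{X})$ as the collection of these fibres. Injectivity is the only point that genuinely uses the earlier observations: if $\cF_{X}^{-1}(g)=\cF_{X}^{-1}(g')$ as collections, then since this common collection is non-empty we may choose $f$ in it and read off $g=\Phi(f)=g'$. Hence the map is a bijection, and its inverse is the bijection asserted in the statement.

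I do not expect a real obstacle here, which matches the paper's assertion that the result is immediate; the content is entirely formal. The one place deserving care, and the only place the hypothesis $g\in \im \cF_{X}$ is actually used, is the injectivity step: it relies on every fibre being non-empty, which is precisely why both the partition and the bijection are indexed by $\im \cF_{X}$ rather than by all objects of $\cH/\cF X$. I would therefore be careful to restrict attention to $\im \cF_{X}$ throughout, since fibres over objects outside the image would be empty and would spoil both the disjoint-cover and the injectivity arguments.
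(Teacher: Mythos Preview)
Your proposal is correct and matches the paper's approach: the paper simply marks the lemma with a \qed\ and no argument, precisely because both claims are the standard set-theoretic facts about fibres of a function that you have spelled out. Your care about restricting to $\im \cF_{X}$ so that all fibres are non-empty is exactly the one point worth noting, and you have handled it properly.
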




Observe that if $\cG$ and $\cH$ are one-object groupoids, the above definitions and results reduce to exactly the classical constructions and statements for groups.

\section{Coset spaces for groupoids}

We will now explain how to construct the groupoid version of a coset space of a group with respect to a subgroup, and the natural group action on this.  

\subsubsection*{The action groupoid}

Let $\cG$ be a groupoid. There is a groupoid $\cG \git \cG$, the action groupoid with respect to the (right) action of $\cG$ on itself, via the functor $\Hom_{\cG}(?,-)\colon \cG \to \mathrm{Set}$ given by
\begin{align*} & \Hom_{\cG}(?,-)(X) = \Hom_{\cG}(X,-)\defeq \bigsqcup_{Y\in \cG} \Hom_{\cG}(X,Y) \\
  & \Hom_{\cG}(?,-)(f\colon X\to Y)\colon \Hom_{\cG}(Y,-)\to \Hom_{\cG}(X,-), (g\colon Y\to Z) \mapsto (g\circ f \colon X \to Z)
\end{align*}
Let us write $\rho_{\cG}$ for $\Hom_{\cG}(?,-)$.

The groupoid $\cG \git \cG$ has objects
\[ f\colon X\to Y \in \bigsqcup_{X} \bigsqcup_{Y} \Hom_{\cG}(X,Y) = \Mor(\cG) \]
and morphisms
\[ (g\colon Y \to Z, f\colon X \to Y)\colon g \to g\circ f. \]

Composition in $\cG \git \cG$ is given by
\[ \left((g',f') \circ (g,f) \colon g \to g\circ f \circ f'\right) = (g,f\circ f') \]
where this is defined, i.e.\ when $g'=g\circ f$. We have $\id_{g}=(g,\id_{s(g)})$.

We have that slice groupoids embed naturally in the action groupoid, as follows.

\begin{lemma}
There is a fully faithful contravariant functor that is injective on objects, $\iota_{X} \colon \cG \slash X \to \cG \git \cG$ given by
\begin{align*} & \iota_{X}(f\colon Y \to X) = f \\
  & \iota_{X}(\smor{g}\colon f \to f') = (f',g)\colon f' \to f
\end{align*}
\end{lemma}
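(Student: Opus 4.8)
The plan is to verify in turn that $\iota_X$ is well-defined, that it is a contravariant functor, that it is fully faithful, and that it is injective on objects; the last two properties will follow almost immediately once the functor axioms are in place.

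First I would confirm that $\iota_X$ lands where claimed. On objects this is clear: an object $f\colon Y\to X$ of $\cG/X$ is in particular an element of $\Mor(\cG)=\Obj(\cG\git\cG)$, and $\iota_X$ is simply this inclusion, hence visibly injective on objects. For a morphism $\smor{g}\colon f\to f'$, recall it encodes the relation $f=f'\circ g$ with $g\colon Y\to Z$, $f\colon Y\to X$, $f'\colon Z\to X$. I would check that the pair $(f',g)$ is a legitimate morphism of $\cG\git\cG$ with source $f'$ and target $f$: by definition such a pair is a morphism $f'\to f'\circ g$, and $f'\circ g=f$ by the triangle relation, so indeed $(f',g)\colon f'\to f$. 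Since $\smor{g}\colon f\to f'$ whereas $(f',g)\colon f'\to f$, the assignment reverses source and target, consistent with contravariance.

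Next, functoriality. For identities, the identity of the object $f\colon Y\to X$ in $\cG/X$ is $\smor{\id_Y}$, whose image is $(f,\id_Y)=(f,\id_{s(f)})=\id_f$, the identity of $f$ in $\cG\git\cG$. The substantive step is compatibility with composition. Given $\smor{g}\colon f\to f'$ and $\smor{g'}\colon f'\to f''$, their slice composite is $\smor{g}\circ\smor{g'}=\smor{g'\circ g}$, whose image is $(f'',g'\circ g)$. On the other hand I would compute $(f',g)\circ(f'',g')$ in $\cG\git\cG$ using the composition rule there; the side condition required by that rule is $f'=f''\circ g'$, which is precisely the relation recorded by $\smor{g'}$, and the rule then returns $(f'',g'\circ g)$. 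Hence $\iota_X(\smor{g}\circ\smor{g'})=\iota_X(\smor{g})\circ\iota_X(\smor{g'})$ in $\cG\git\cG$. I expect this to be the main obstacle, not because it is hard but because the slice category composes triangles in diagrammatic (left-to-right) order while $\cG\git\cG$ composes in the standard right-to-left order; the contravariance is in fact hidden in this clash of conventions, so the directions must be tracked carefully to confirm that they match up as required.

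Finally, fully faithfulness. For fixed objects $f,f'$ I must show $\iota_X$ induces a bijection $\Hom_{\cG/X}(f,f')\to\Hom_{\cG\git\cG}(f',f)$. By the first lemma of this section, the left-hand set is a singleton. For the right-hand set, a morphism of $\cG\git\cG$ with source $f'\colon Z\to X$ is necessarily of the form $(f',h)$ with target $f'\circ h$, so having target $f\colon Y\to X$ forces $f'\circ h=f$, which in the groupoid $\cG$ determines $h=(f')^{-1}\circ f$ uniquely; thus the right-hand set is also a singleton. A map between one-element sets arising from a well-defined functor is automatically a bijection, so $\iota_X$ is fully faithful. (Equivalently, one checks directly that $\smor{h}\colon f\to f'$ is the unique preimage of $(f',h)$ under $\iota_X$.) Together with the injectivity on objects noted above, this establishes all the claimed properties.
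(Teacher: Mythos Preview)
Your proof is correct and follows essentially the same approach as the paper: the functoriality computation is identical (including the observation that the contravariance is hidden by the slice category's diagrammatic composition convention), and injectivity on objects is dispatched in the same trivial way. The only minor difference is that for fully faithfulness you invoke the earlier lemma that hom-sets in $\cG/X$ are singletons and then observe that $\Hom_{\cG\git\cG}(f',f)$ is also a singleton, whereas the paper checks faithfulness and fullness directly; both arguments are short and equivalent, and you note the direct version parenthetically anyway.
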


\begin{proof} For functoriality, let $\smor{g}\colon f\to f'$, $\smor{g'}\colon f' \to f''$.  Then we have
  \begin{align*} \iota_{X}(\smor{g} \circ \smor{g'}) & =\iota_{X}(\smor{g' \circ g}) \\
      & = (f'',g'\circ g)\colon f'' \to (f''\circ g' \circ g =f) \\
      & = (f',g)\circ (f'',g') \\
      & = \iota_{X}(\smor{g})\circ \iota_{X}(\smor{g'}).
  \end{align*}
It is straightforward to see that $\iota_{X}$ respects identity morphisms.  Despite appearances, this shows that $\iota_{X}$ is a \emph{contravariant} functor: our notational convention for composition in the slice category is unfortunately misleading at this point.
  
The functor $\iota_{X}$ is faithful as if $\iota_{X}(\smor{g}\colon f \to f')=\iota_{X}(\smor{h}\colon f \to f')$, then $(f',g)=(f',h)$.  Then necessarily $g=h$.

For fullness, take $(f',g)\colon \iota_{X}(f')\to \iota_{X}(f)$.  Then by the definition of morphisms in $\cG \git \cG$, we have $\iota_{X}(f)=f'\circ g$ and hence $f=f' \circ g$.  Then there exists a triangle
\begin{center}
\begin{tikzpicture}[node distance=1cm,on grid]
\node (B) at (0,0) {$Y$};
\node (C) [right=of B] {$Z$};
\node (A) [below=of B,xshift=0.5cm,yshift=0cm] {$X$};
\draw[->] (B) -- node [above] {$g$} (C);
\draw[->] (B) -- node [left] {$f$} (A);
\draw[->] (C) -- node [right] {$f'$} (A);
\node at (-1,-0.5) {$\smor{g}\ =$};
\end{tikzpicture}
\end{center}
such that $\iota_{X}(\smor{g}\colon f \to f')=(f',g)$ as required.

Lastly, it is immediate that $\iota_{X}$ is injective on objects.
\end{proof}

Later, we will have need of the corresponding covariant functor $\iota_{X}^{*}\colon \op{(\cG \slash X)} \to \cG \git \cG$ given by
\begin{align*} & \iota_{X}(f\colon X \to Y) = f \\
  & \iota_{X}(\smor{g}\colon f' \to f) = (f',g)\colon f' \to f
\end{align*}
This functor is also fully faithful and injective on objects.

\subsubsection*{An analogue of the coset space}
  
Now let $\cH$ be a subgroupoid of $\cG$.  We will assume that $\cH$ is \emph{wide}, that is, $\Obj(\cH)=\Obj(\cG)$; this is a very mild restriction with respect to what follows, and is made so that additional clauses of the form ``when $X$ is also an object of $\cH$'' can be omitted. We will construct a groupoid corresponding to the action of $\cG$ on the right cosets of $\cH$.

In what follows, we use the ``source'' function $s\colon \Mor(\cG) \to \Obj(\cG)$, $s(f\colon X\to Y)=X$, and later the corresponding ``target'' function $t\colon \Mor(\cG)\to \Obj(\cG)$, $t(f\colon X\to Y)=Y$.

First, we define a relation $\sim_{\cH}$ on $\Mor(\cG)$ by
\[ (g\colon X\to Y) \sim_{\cH} (g'\colon X' \to Y') \iff (s(g)=s(g')\ \text{and}\ \exists\ h\in \Mor_{\cH}(Y,Y')\ \text{such that}\ g'=h\circ g). \]
It is straightforward to check that $\cH$ being a groupoid implies that $\sim_{\cH}$ is an equivalence relation. For $g\in \Mor(\cG)$, denote by $[g]$ its $\sim_{\cH}$-equivalence class.

Note that by the definition of $\sim_{\cH}$, $s\colon (\Mor(\cG)/\sim_{\cH})\to \Obj(\cG)$, $s([g])=s(g)$ is well-defined.

\begin{lemma} There is a contravariant functor $\rho_{\cH}\colon \cG \to \mathrm{Set}$, defined by
  \begin{align*} & \rho_{\cH}(X) = \{ [g] \mid s(g)=X \} \\
    & \rho_{\cH}(g\colon X \to Y) = \rho_{\cH}(g)\colon \rho_{\cH}(Y) \to \rho_{\cH}(X), \rho_{\cH}(g)([f])=[f\circ g]
  \end{align*}
\end{lemma}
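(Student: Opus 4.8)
The plan is to verify the three conditions defining a contravariant functor: that each $\rho_{\cH}(g)$ is a well-defined function, that identities are preserved, and that composition is reversed. Nearly all of the content lies in the well-definedness of $\rho_{\cH}(g)$, so I would treat that first and dispatch the two functoriality axioms by direct computation afterwards.

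First I would fix a morphism $g\colon X\to Y$ of $\cG$ and check that $[f]\mapsto [f\circ g]$ really defines a function $\rho_{\cH}(Y)\to\rho_{\cH}(X)$. An element of $\rho_{\cH}(Y)$ is a class $[f]$ with $s(f)=Y$, so $f$ is left-composable with $g$, the composite $f\circ g$ is defined, and $s(f\circ g)=s(g)=X$; hence $[f\circ g]$ genuinely lies in $\rho_{\cH}(X)$. The substantive point is independence of the chosen representative, and this is the step I expect to be the crux. Suppose $[f]=[f']$, so that by the definition of $\sim_{\cH}$ we have $s(f)=s(f')=Y$ together with some $h\in\Mor_{\cH}(t(f),t(f'))$ satisfying $f'=h\circ f$. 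Post-composing with $g$ and using associativity gives $f'\circ g=h\circ(f\circ g)$, where now $h\in\Mor_{\cH}(t(f\circ g),t(f'\circ g))$ since post-composition by $g$ does not change targets. Thus the very same $\cH$-morphism $h$ witnesses $[f\circ g]=[f'\circ g]$, and $\rho_{\cH}(g)$ is well defined.

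It then remains to check the two functoriality axioms, both of which follow formally. For identities, $\rho_{\cH}(\id_X)([f])=[f\circ\id_X]=[f]$, so $\rho_{\cH}(\id_X)=\id_{\rho_{\cH}(X)}$. For contravariance, given composable $g\colon X\to Y$ and $g'\colon Y\to Z$, the composite $g'\circ g\colon X\to Z$ induces $\rho_{\cH}(g'\circ g)\colon\rho_{\cH}(Z)\to\rho_{\cH}(X)$; evaluating on $[f]\in\rho_{\cH}(Z)$ yields $[f\circ g'\circ g]$, whereas $\bigl(\rho_{\cH}(g)\circ\rho_{\cH}(g')\bigr)([f])=\rho_{\cH}(g)([f\circ g'])=[(f\circ g')\circ g]$. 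These agree by associativity, so $\rho_{\cH}(g'\circ g)=\rho_{\cH}(g)\circ\rho_{\cH}(g')$ and the order of composition is reversed, confirming that $\rho_{\cH}$ is a contravariant functor.
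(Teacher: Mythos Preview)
Your proof is correct and follows essentially the same route as the paper: verify that $\rho_{\cH}(g)$ is well defined by showing the same $h\in\Mor(\cH)$ witnesses $[f\circ g]=[f'\circ g]$, then dispatch the identity and composition axioms by direct computation. One small terminological slip: the operation $-\circ g$ you perform is usually called \emph{pre}-composition by $g$, not post-composition, though the mathematics is unaffected.
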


  \begin{proof}
    We first check that $\rho_{\cH}(g)$, and hence $\rho_{\cH}$, is well-defined.  Assume that $[f]=[f']$, so there exists $h\in \Mor(\cH)$ such that $f'=h\circ f$.  Then $f' \circ g=h\circ f \circ g$, so $[f' \circ g]=[f \circ g]$. Then
    \[ \rho_{\cH}(g)([f])=[f\circ g]=[f'\circ g]=\rho_{\cH}(g)([f']) \]
    and $\rho_{\cH}(g)$ is well-defined for all $g$.

    We have $\rho_{\cH}(\id_{X}\colon X\to X)([f])=[f\circ \id_{X}]=[f]$ for all $f$ such that $s(f)=X$, so $\rho_{\cH}(\id_{X})=\id_{\rho_{\cH}(X)}$ as required.  Then given $g\colon X\to Y$, $g'\colon Y\to Z$ and $[f]\in \rho_{\cH}(Z)$, we have
    \[ \rho_{\cH}(g' \circ g)([f])=[f\circ g' \circ g]=\rho_{\cH}(g)([f\circ g'])=(\rho_{\cH}(g) \circ \rho_{\cH}(g'))([f]) \]
    and so $\rho_{\cH}$ is functorial.
  \end{proof}

  Next, we give the definition of the ``groupoid coset space''.

  \begin{definition} Let $\cG$ be a groupoid and $\cH$ a subgroupoid of $\cG$. Define the $\cH$-coset space action groupoid $(\cH \mathrel{:} \cG)\git \cG$ to be the groupoid with objects the elements of $\Mor(\cG)/\sim_{\cH}$ and morphisms
    \[ ([g],f)\colon [g] \to [g\circ f] \]
    defined when $t(f)=s([g])=s(g)$. Composition of morphisms is given by
    \[ ([g'],f')\circ ([g],f) = ([g],f\circ f') \]
    when this is defined, i.e.\ when $[g']=[g\circ f]$. We have $\id_{[g]}=([g],\id_{s([g])})=([g],\id_{s(g)})$.
    \end{definition}

  \begin{proposition} There is a full (covariant) functor that is surjective on objects, $\pi_{\cH}\colon \cG \git \cG \to (\cH \mathrel{:} \cG)\git \cG$ given by
    \begin{align*}
      & \pi_{\cH}(f)=[f] \\
      & \pi_{\cH}(g,f)=([g],f)
    \end{align*}
  \end{proposition}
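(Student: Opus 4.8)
The plan is to establish that $\pi_{\cH}$ is a well-defined functor and then read off surjectivity on objects and the required surjectivity on morphisms, working entirely by unwinding the definitions of the two action groupoids. I would organise this as four short verifications, the substance of which lies in correctly matching the strict equalities present in $\cG \git \cG$ with the weaker $\sim_{\cH}$-class equalities in the quotient.

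First I would confirm $\pi_{\cH}$ is well-defined. On objects this is immediate, since $\pi_{\cH}(f)=[f]$ is nothing but the canonical quotient map $\Mor(\cG)\to \Mor(\cG)/\sim_{\cH}$. On morphisms, given $(g,f)\colon g\to g\circ f$ in $\cG \git \cG$, I must check that $([g],f)$ is a genuine morphism of $(\cH : \cG)\git \cG$ with the correct endpoints. The former holds because the defining condition $t(f)=s(g)$ for $(g,f)$ to be a morphism of $\cG \git \cG$ is precisely the condition $t(f)=s([g])=s(g)$ required for $([g],f)$ to be a morphism of the coset groupoid; the latter holds since $\pi_{\cH}$ sends the source $g$ to $[g]$ and the target $g\circ f$ to $[g\circ f]$, which are exactly the source and target of $([g],f)$. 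Note there is no quotient on the morphisms of the source, so no further well-definedness question arises there.

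Next I would check functoriality. For identities, $\pi_{\cH}(\id_{g})=\pi_{\cH}(g,\id_{s(g)})=([g],\id_{s(g)})=\id_{[g]}$. For composition, take a pair composable in the source, $(g,f)\colon g\to g\circ f$ and $(g',f')\colon g'\to g'\circ f'$ with $g'=g\circ f$; here I would point out the one genuinely conceptual step, namely that composability in $\cG \git \cG$ is the equality $g'=g\circ f$, which implies the weaker condition $[g']=[g\circ f]$ needed for the images to be composable in the target. Then $\pi_{\cH}((g',f')\circ(g,f))=\pi_{\cH}(g,f\circ f')=([g],f\circ f')$ agrees with $\pi_{\cH}(g',f')\circ\pi_{\cH}(g,f)=([g'],f')\circ([g],f)=([g],f\circ f')$, as desired.

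Surjectivity on objects is then immediate: every object of $(\cH : \cG)\git \cG$ is a class $[g]$ with $g\in \Mor(\cG)=\Obj(\cG \git \cG)$, and $\pi_{\cH}(g)=[g]$. For fullness I would argue that every morphism of $(\cH : \cG)\git \cG$ is hit: an arbitrary such morphism $([g],f)\colon [g]\to [g\circ f]$ admits the preimage $(g,f)\colon g\to g\circ f$, which is a legitimate morphism of $\cG \git \cG$ precisely because $t(f)=s(g)$, and which satisfies $\pi_{\cH}(g,f)=([g],f)$; in fact the lift may already be taken out of the chosen representative $g$. I expect this final point to be the one deserving the most care, since it is exactly the shared existence condition $t(f)=s(g)$ between the two categories that makes the quotient functor surject onto all morphisms. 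None of the steps is a serious obstacle individually; the only thing to watch throughout is the interplay between strict equality of representatives in $\cG \git \cG$ and equality of $\sim_{\cH}$-classes, which is what both makes $\pi_{\cH}$ well-defined and guarantees that no morphism of the coset groupoid is missed.
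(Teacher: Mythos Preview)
Your proposal is correct and follows essentially the same approach as the paper: both verify functoriality by checking identities and the composition law directly, then note that surjectivity on objects is immediate from the quotient map and that fullness follows because any morphism $([g],f)$ in the target lifts to $(g,f)$ in $\cG \git \cG$. Your additional explicit check of well-definedness (that $([g],f)$ is a genuine morphism with the correct source and target, and that composability upstairs implies composability downstairs) is a reasonable elaboration not spelled out in the paper's proof, but otherwise the arguments coincide.
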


  \begin{proof} Recall that in $\cG \git \cG$, we have $\id_{g}\colon g \to g$ given by $\id_{g}=(g,\id_{s(g)})$.  Then
    \[ \pi_{\cH}(g,\id_{s(g)})=([g],\id_{s(g)})=\id_{\pi_{\cH}(g)}. \]

    Consider the composable morphisms $(g\circ f,f')$, $(g,f)$ in $\cG \git \cG$.  We have
    \begin{align*} \pi_{\cH}((g\circ f,f') \circ (g,f)) & = \pi_{\cH}(g,f\circ f') \\
      & = ([g],f\circ f') \\
      & = ([g\circ f],f') \circ ([g],f) \\
      & \pi_{\cH}(g\circ f,f')\circ \pi_{\cH}(g,f)
    \end{align*}
    so that $\pi_{\cH}$ is (covariantly) functorial.

    Given $([g],f)\in \Mor((\cH \mathrel{:} \cG)\git \cG)$, we have $\pi_{\cH}(g,f)=([g],f)$, so that $\pi_{\cH}$ is full.
    
    That $\pi_{\cH}$ is surjective on objects is clear from the definitions.
  \end{proof}

  The ``source'' function can be upgraded to a (forgetful) functor on each of the categories considered thus far, as follows:
  \begin{itemize}
  \item $s\colon \op{(\cG \slash X)}\to \cG$, $s(f\colon Y\to X)=s(f)=Y$, $s(\smor{g}\colon f' \to f)=g^{-1}$, where $\smor{g}$ is the triangle as previously but now considered in $\op{(\cG \slash X)}$ and hence as a morphism from $f'$ to $f$;
  \item $s\colon \cG \git \cG \to \cG$, $s(g)=s(g)$, $s(g,f)=f^{-1}$;
  \item $s\colon (\cH \mathrel{:} \cG) \git \cG \to \cG$, $s([g])=s(g)$, $s([g],f)=f^{-1}$.
  \end{itemize}
Indeed, there is a commutative diagram of groupoids
\begin{center}
  \begin{tikzpicture}[node distance=2.5cm,on grid]
\node (B) at (0,0) {$\op{(\cG \slash X)}$};
\node (C) [right=of B] {$\cG \git \cG$};
\node (D) [right=of C] {$(\cH \mathrel{:} \cG) \git \cG$};
\node (A) [below=of C] {$\cG$};
\draw[->] (B) -- node [left,xshift=-0.25em] {$s$} (A);
\draw[->] (C) -- node [right,xshift=-0.1em] {$s$} (A);
\draw[->] (D) -- node [right] {$s$} (A);
\draw[->] (B) -- node [above] {$\iota_{X}^{*}$} (C);
\draw[->] (C) -- node [above] {$\pi_{\cH}$} (D);
\end{tikzpicture}
\end{center}

Let us consider the functor $\pi_{\cH}^{X}\defeq \pi_{\cH}\circ \iota_{X}^{*}$.  Explicitly, we have
\begin{align*}
  & \pi_{\cH}^{X}(f\colon Y \to X)=\pi_{\cH}(f)=[f] \\
  & \pi_{\cH}^{X}(\smor{g}:f' \to f)=\pi_{\cH}((f',g))=([f'],g)\colon [f'] \to [f]
\end{align*}
Let us define $(\cH \mathrel{:} \cG \slash X) \git \cG$ to be the full image of $\pi_{\cH}\circ \iota_{X}^{*}$.  Then $(\cH \mathrel{:} \cG \slash X) \git \cG$ is a connected subgroupoid of $(\cH \mathrel{:} \cG) \git \cG$; its fullness as a subcategory corresponds to being ``closed under the $\cG$-action'', or a ``$\cG$-submodule'' of the ``$\cG$-module'' $(\cH \mathrel{:} \cG) \git \cG$.

The groupoid $(\cH \mathrel{:} \cG) \git \cG$ is obtained by an equivalence relation on morphisms of $\cG$ coming from the subgroupoid $\cH$: somewhat loosely, morphisms in $\cH$ become equivalent to identity morphisms in $(\cH \mathrel{:} \cG)$.  The construction is done in a $\cG$-equivariant way, hence the ``coset space'' $(\cH \mathrel{:} \cG)$ still admits a $\cG$-action, giving rise to $(\cH \mathrel{:} \cG) \git \cG$.

The groupoid $(\cH \mathrel{:} \cG \slash X) \git \cG$ is a ``sliced'' version of this: we restrict attention to those $\sim_{\cH}$-equivalence classes of morphisms that contain at least one morphism whose codomain is the fixed object $X$. Just as with the usual slice category, by slicing we focus on a connected groupoid with a ``base point'' an object of interest to us, disposing of other components of $\cG$, of which there may be many.

Our $\cG$-action works by pre-composition, hence is compatible with the slicing at $X$.  The upshot of the above constructions is therefore that we have a groupoid whose objects are $\sim_{\cH}$-equivalence classes containing certain morphisms from our original groupoid, with a $\cG$-action by pre-composition.

Note that unless $\cH$ contains \emph{only} the identity morphisms, $\sim_{\cH}$-equivalence classes will necessarily contain morphisms with codomains not equal to $X$: $s([f])$ is well-defined (by construction) but $t([f])$ is in general not.

We conclude with a question:

\begin{question*} Consider $\cG$, $\cH$ groupoids, $\cF\colon \cG \to \cH$, $X\in \cG$ and the induced functor $\cF_{X}\colon \cG/X\to \cH/\cF X$. Does there exist a (wide) subgroupoid $\cK$ of $\cG$ such that $(\cK \mathrel{:} \cG \slash X) \git \cG$ gives a construction for groupoids analogous to the natural group action on the coset space of the kernel of a group homomorphism?
\end{question*}

If the answer is in the affirmative, one might hope for a version of the First Isomorphism Theorem in this setup.






\bibliographystyle{halpha}
\bibliography{biblio}\label{references}

\normalsize

\end{document}